\newtheorem{dummy}{anything}
\newtheorem{theorem}[dummy]{Theorem}
\newtheorem{lemma}{Lemma}
\newtheorem{proposition}{Proposition}
\newtheorem{corollary}{Corollary}
\newtheorem{definition}{Definition}
\newcommand{\tc}{{\rm\bf {TC}}}
\newcommand{\pcirc}{\kern .7pt {\scriptstyle \circ} \kern 1pt}
\newcommand{\R}{{\mathbf R}}
\newcommand{\Z}{{\mathbf Z}}
\newcommand{\vol}{{\rm {vol}}}
\newcommand{\hfl}[2]{\smash{\mathop{\hbox to 1 truecm{\kern %
3pt\rightarrowfill\kern 3pt}}%
\limits^{\scriptstyle#1}_{\scriptstyle#2}}}
\newcommand{\cqfd}{\unskip\kern 6pt\penalty 500%
\raise -2pt\hbox{\vrule\vbox to10pt{\hrule width %
4pt\vfill\hrule}\vrule}\smallskip}
\title{Topology of random linkages}
\author{Michael Farber\footnote{The research was supported by a grant from the Royal Society and from the UK
  Engineering and Physical Sciences Research Council.}
\\Department of Mathematical Sciences\\ University of Durham, UK}
\date{August 7, 2007}
\begin{document}
\maketitle 

\begin{abstract}
Betti numbers of configuration spaces of mechanical linkages (known also as polygon spaces)
depend on a large number of parameters -- the lengths of the bars of the linkage.
Motivated by applications in topological robotics, statistical shape theory and molecular biology,
we view these lengths as random variables and study asymptotic values of the average Betti numbers as the number of links $n$ tends to infinity.
We establish a surprising fact that for a reasonably ample class of sequences of probability measures the asymptotic values of the average Betti numbers are
independent of the choice of the measure. The main results of the paper apply to planar linkages as well as for linkages in $\R^3$.
We also prove results about higher moments of Betti numbers.
\end{abstract}

\section{Introduction}

Configuration spaces of mechanical systems which appear in
topological robotics depend typically on many
parameters which are only partially known and can be viewed as
random variables. The configuration space,
which depends essentially on the values of the parameters, can be viewed in such
a situation as a {\it random topological space}.  Betti numbers of the configuration space
are then random functions and information about their mathematical expectations and other statistical characteristics
may have practical importance in various applications.

Interesting examples of such random topological
spaces are provided by configuration spaces of mechanical linkages, the main object of study in this paper.
A {\it linkage} is a simple mechanism consisting of $n$ bars in $\R^3$ having fixed lengths $l_1, \dots, l_n$ which are cyclically connected by
revolving joints forming a closed polygonal chain. Angles between bars of the linkage may vary, the only condition is that the links do not become
disconnected from each other.
 \begin{figure}
\begin{center}
\resizebox{5.5cm}{4cm}
{\includegraphics[74,501][498,825]{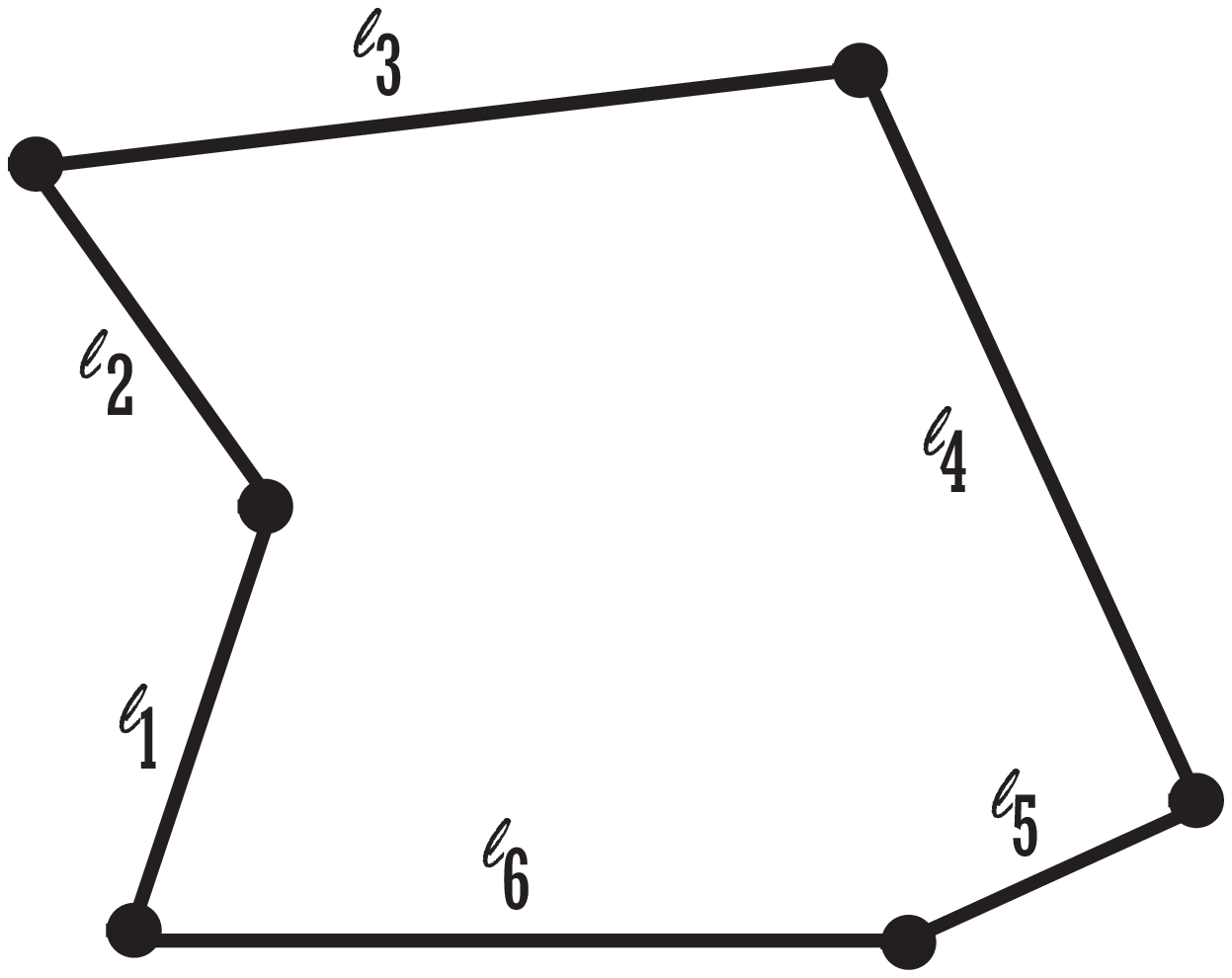}}
\end{center}
\end{figure}
We consider a pair of configurations of the linkage as being identical if one can be obtained from the other by a rigid motion of the space $\R^3$.
The configuration space of the linkage, denoted in this paper by
\begin{eqnarray}
N_\ell = \{(u_1, \dots, u_n)\, \in \, S^2\times \dots\times S^2;
\, \sum_{i=1}^n l_iu_i \, =\, 0\in \R^3\}\, /\, {\rm {SO}}(3),
\end{eqnarray}
parameterizes all
possible configurations. Here $\ell =(l_1, l_2, \dots, l_n)\in \R^n_+$ is the $n$-tuple of the bar lengths, called the {\it length vector}\footnote{In this paper $\R^n_+\subset \R^n$ denotes the set of all points $(l_1, \dots, l_n)$ having positive coordinates $l_1>0, \dots, l_n>0$.}. Spaces $N_\ell$ are also known as {\it polygon spaces} as they parameterize shapes of all $n$-gons in $\R^3$ having sides of length $l_1, \dots, l_n$.

The spaces $N_\ell$ appear in molecular biology where they represent shapes of
long molecules. Clearly, information about topological properties of these spaces
may lead to interesting new effects in molecular and chemical design.
Statistical shape theory, see e.g. \cite{Kendall}, is another area where the spaces
$N_\ell$ play an interesting role: they describe the space of shapes having
certain geometric properties with respect to the central point.
Having in mind these applications it is quite natural to assume that the number of links $n$ is large, $n\to \infty$, and that the numbers $l_i>0$
are not entirely known or are known with some random error.

Let us now give some basic facts concerning the topology of $N_\ell$
and its dependence on the length vector $\ell\in \R^3_+$.
For a {\it generic} $\ell$ (this term is explained below), $N_\ell$ is a compact smooth manifold of dimension $2(n-3)$.
If $\ell$ is not generic then
$N_\ell$ is a compact manifold with singularities.
Clearly, $N_\ell = N_{t\ell}$ for any $t>0.$
Hence we may consider $\ell$ as lying in the quotient space $\Delta^{n-1}$ of $\R^n_+$ modulo the action of $\R_+$ by scalar multiplication.
Clearly, $\Delta^{n-1}$ can be identified with the interior of the standard
simplex, i.e. the set given by the inequalities $l_1 >0, \dots,
l_n>0$ and $\sum l_i =1$.

It is easy to see that $N_\ell$ is diffeomorphic to $N_{\ell'}$ if
$\ell'$ is obtained from $\ell$ by permuting coordinates. Let $\Sigma_n$ denote the permutation group of $n$ symbols. Clearly $\Sigma_n$ acts on $\R^n$ and on $\Delta^{n-1}$ permuting coordinates and the manifold $N_\ell$ depends only on the $\Sigma_n$-orbit of the vector $\ell$.

To explain further the character of dependence of $N_\ell$ on $\ell$ we need to introduce the concept of a {\it chamber}.
For any subset $J\subset \{1, \dots, n\}$
we denote by $H_J\subset \R^n$ the hyperplane defined by the
equation
\begin{eqnarray}\label{wall}\sum_{i\in J} l_i = \sum_{i\notin J}
l_i.\end{eqnarray}
The connected components of the complements $$\R^{n}_+ - \bigcup_J H_J$$ are called {\it chambers}.
{\it Generic} length vectors are defined as those lying in chambers,  not on hyperplanes $H_J$.
It is an interesting combinatorial problem to find or estimate the number $c_n$ of different
$\Sigma_n$-orbits of chambers for large $n$.  The numbers $c_n$ are known for $n\le 9$, see \cite{HR}:
\vskip 0.5cm
\begin{center}
\begin{tabular}{|c|c|c|c|c|c|c|c|}
\hline
$n$ &\footnotesize 3&\footnotesize 4&\footnotesize 5&\
\footnotesize 6 &\footnotesize 7&\footnotesize 8&\footnotesize 9
\\[1mm] \hline 
\small $c_n$ &\footnotesize 2 &\footnotesize 3&
\footnotesize 7&\footnotesize 21&\footnotesize 135&
\footnotesize 2470&  \footnotesize  175428
\\[1mm] \hline
\end{tabular}
\end{center}
\vskip 0.2cm
We see that the number $c_n$ grows very fast.
The diffeomorphism type of $N_\ell$ is constant when
$\ell$ varies inside a chamber. One of the main results of \cite{FHS} states that for a given $n$
{\it the map $\ell\mapsto N_\ell$ gives a one-to-one correspondence between the $\Sigma_n$-orbits of chambers and the diffeomorphism types of
manifolds $N_\ell$ where $\ell \in \R^n_+$ is generic}.

The following picture summarizes our description of {\it the field of topological spaces} $\ell\mapsto N_\ell$ viewed as a single object.
The open simplex $\Delta^{n-1}$ is divided into a huge number of tiny chambers, each representing a diffeomorphism type of manifolds $N_\ell$.
The
symmetric group $\Sigma_n$ acts on the simplex $\Delta^{n-1}$ mapping chambers to chambers and manifolds $N_\ell$ and $N_{\ell'}$
are diffeomorphic if and only if the vectors $\ell, \ell'$ lie in chambers belonging the same $\Sigma_n$-orbit.

The main idea of this work is to use methods of probability theory and statistics in dealing with the
variety of diffeomorphism types of configuration spaces $N_\ell$ for $n$ large.
In applications different manifolds $N_\ell$ appear with different
probabilities and our intention is to study the most \lq\lq frequently emerging\rq\rq\, manifolds $N_\ell$ and the mathematical expectations of their
topological invariants.
Formally, we view the length vector $\ell\in \Delta^{n-1}$ as a random variable
whose statistical behavior is characterized by a probability measure $\nu_n$. Topological invariants of $N_\ell$ become random functions and
their mathematical expectations might be very useful for applications. Thus, one is led to study the {\it average} or {\it expected} Betti numbers\footnote{It is well known that all odd-dimensional Betti numbers of $N_\ell$ vanish, see \cite{Kl}.}
\begin{eqnarray}\label{avbn}
{\rm {E}}(b_{2p}(N_\ell)) = \int_{\Delta^{n-1}}b_{2p}(N_\ell)d\nu_n\end{eqnarray}
where the integration is understood with respect to $\ell$.
One of the main results of this paper states that for $p$ fixed and $n$ large this average $2p$-dimensional Betti number
can be calculated explicitly up to an exponentially small error.
More precisely, we prove that
$${\rm {E}}(b_{2p}(N_\ell)) = \int_{\Delta^{n-1}}b_{2p}(N_\ell)d\nu_n \, \sim\,  \sum_{i=0}^p \binom{n-1}{i}. $$
It might appear surprising that the asymptotic value of the average Betti number $b_{2p}(N_\ell)$
does not depend of the sequence of probability measures $\nu_n$ which are allowed to vary in an ample
class of {\it admissible} probability measures described in \S \ref{results} below.

We also find the asymptotics of the average Betti numbers $b_p(M_\ell)$ of configuration spaces of {\it planar} polygon spaces
\begin{eqnarray}
M_\ell = \{(u_1, \dots, u_n)\, \in \, S^1\times \dots\times S^1;
\, \sum_{i=1}^n l_iu_i \, =\, 0\in \R^2\}\, /\, {\rm {SO}}(2).
\end{eqnarray}
In paper \cite{FK} we calculated the asymptotic values of the average Betti numbers
\begin{eqnarray}\label{avml}
{\rm {E}}(b_p(M_\ell)) = \int_{\Delta^{n-1}} b_p(M_\ell)d\nu_n \end{eqnarray}
for two sequences of probability measures $\nu_n$ on $\R^n_+$.
It was discovered in \cite{FK} that
for large $n$ the answers for
these two distinct measures were equal.
The present paper explains this {\it universality phenomenon}.
We also compute asymptotics of the moments
\begin{eqnarray}\label{moments}
\int_{\Delta^{n-1}}b_{2p}(N_\ell)^kd\nu_n, \quad \quad \int_{\Delta^{n-1}}b_p(M_\ell)^kd\nu_n,\end{eqnarray} where $k=1, 2, 3, \dots$ assuming that $n$
tends to infinity.

In this paper we employ a method different from the one used in \cite{FK}: instead of dealing with explicit expressions for Betti numbers we specify a domain in the simplex of parameters where the behavior of Betti numbers can easily be described and, moreover, the volume of the complement of the domain is exponentially small.
The proofs presented below are shorter than in \cite{FK} although
theorems of the present paper are more general in several respects: (a) they allow more general class of measures, (b) in this paper we treat both cases of
planar and spatial linkages and (c) the present approach is applicable to higher moments as well.

Here are a few comments on the history of the problem.
Polygon spaces were studied by K. Walker \cite{Wa},
M. Kapovich and J. Millson \cite{KM1} and others.
Betti numbers of the spaces $N_\ell$ were first described by A.A. Klyachko \cite{Kl}
who used methods of algebraic geometry.
 J.-Cl. Hausmann and A.
Knutson \cite{HK} applied methods of symplectic topology (toric varieties) to study the cohomology algebras $H^\ast(N_\ell)$.
Betti numbers of planar polygon spaces $M_\ell$ as functions of the length vector $\ell$ were found in \cite{FS}; this result uses techniques of
Morse theory of manifolds with involution. The recent preprint \cite{FHS} gives a general classification of diffeomorphism types of polygon spaces $M_\ell$ and $N_\ell$ in terms of combinatorics of chambers and the action of the symmetric group $\Sigma_n$.

\section{Statements of the main results}\label{results}

To state the main results of this paper we need to define what is meant by an {\it admissible sequence of measures}.

For a vector $\ell=(l_1, \dots, l_n)$ we denote by
$|\ell| = \max\{|l_1|, \dots, |l_n|\}$
the maximum of absolute values of coordinates.
The symbol $\Delta^{n-1}$ denotes the open unit simplex, i.e. the set of all vectors $\ell=(l_1, l_2, \dots, l_n)\in \R^n$ such that $l_i>0$ and $l_1+\dots+l_n=1$.
Let $\mu_n$ denote the Lebesgue measure on $\Delta^{n-1}$
normalized so that $\mu_n(\Delta^{n-1}) =1.$ In other words, for a  Lebesgue measurable subset $A\subset \Delta^{n-1}$ one has
$$\mu_n(A) = \frac{\vol(A)}{\vol (\Delta^{n-1})}$$ where the symbol $\vol $ denotes the $(n-1)$-dimensional volume.

For an integer $p\geq 1$ we denote by
\begin{eqnarray}\Lambda_p= \Lambda_p^{n-1} =\{\ell\in \Delta^{n-1}; |\ell|\ge (2p)^{-1}\}.
\end{eqnarray}
Clearly, $\Lambda_p\subset \Lambda_q$ for $p\le q$ and $\Lambda_p = \Delta^{n-1}$ for $2p \geq n$.

It will be helpful to think of $p$ being fixed and of $n$ being large, say, tending to $\infty$.
The set $\Lambda_p$ is shown on the picture on the left. It is obtained from the simplex $\Delta^{n-1}$ by removing $n$ domains $\Lambda_p^i$ defined as
$\Lambda_p^i=\{\ell=(l_1, \dots, l_n)\in \Delta^{n-1}; l_i\geq (2p)^{-1}\}$. Here $i=1, \dots, n-1$.
Each $\Lambda_p^i$ is homothetic to $\Delta^{n-1}$ with coefficient $(1-\frac{1}{2p})$ and hence $\mu_n(\Lambda_p^i)=(1-\frac{1}{2p})^{n-1}$. It follows that $\vol(\Lambda_p) \leq n(1-\frac{1}{2p})^{n-1}$. We conclude that the Lebesgue measure of the set $\Lambda_p$ is exponentially small for large $n$.
\begin{figure}[h]
\vspace{-10pt}
\begin{center}
\resizebox{10cm}{5cm}
{\includegraphics[24,407][580,704]{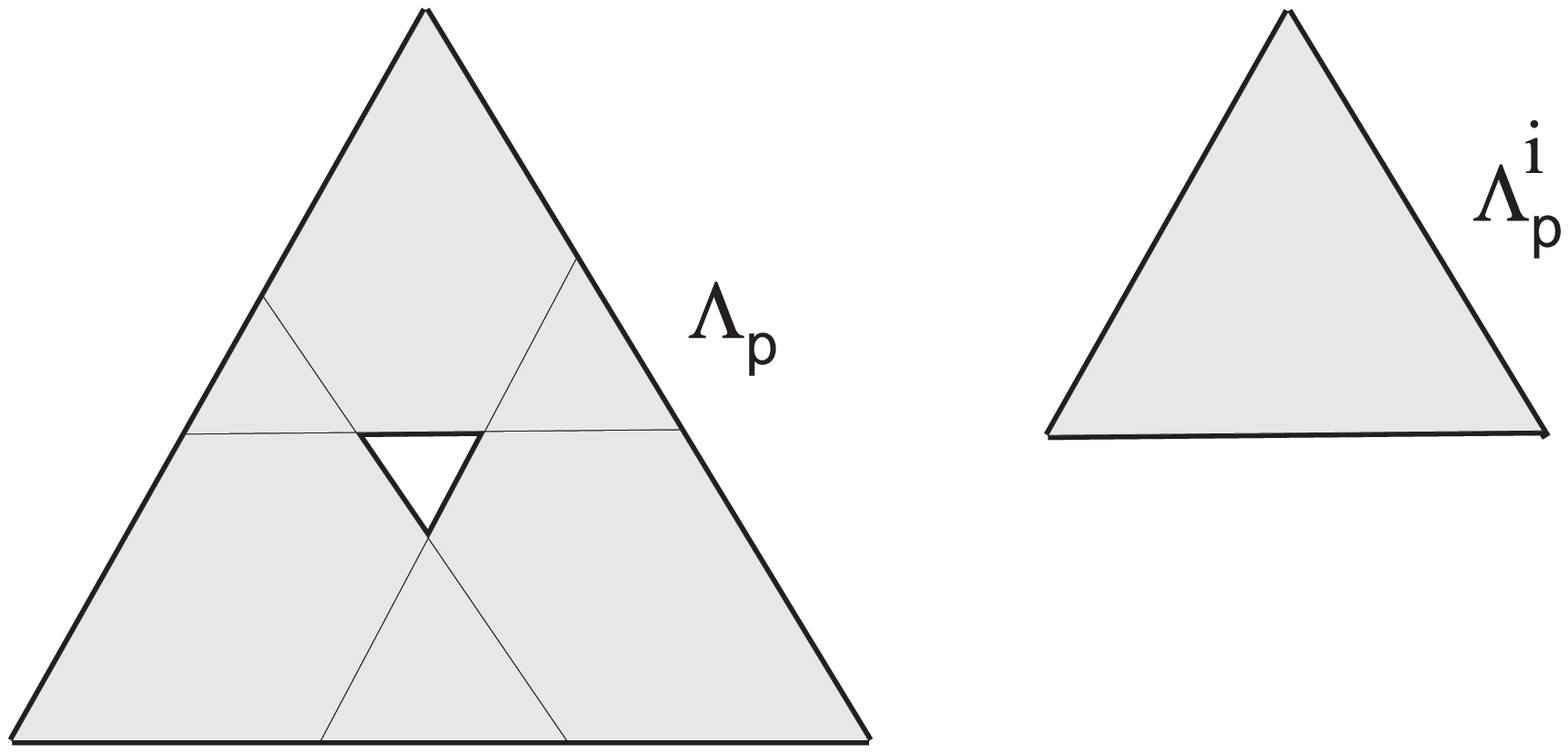}}
\end{center}
\vspace{-20pt}
\end{figure}

\begin{definition}\label{admissible}
{\rm Consider a sequence of probability measures $\nu_n$ on $\Delta^{n-1}$ where $n=1, 2, \dots$. It is called {\it admissible} if
$\nu_n = f_n\cdot \mu_n$ where $f_n: \Delta^{n-1}\to \R$ is a sequence of functions satisfying:
{\rm(i)} $f_n\geq 0$, {\rm(ii)}  $\int_{\Delta^{n-1}} f_n d\mu_n =1$, and
{\rm(iii)}  for any $p\geq 1$ there exist constants $A>0$ and $0<b<2$ such that
\begin{eqnarray}
f_n(\ell) \le A\cdot b^n
\end{eqnarray}
for any $n$ and any $\ell\in \Lambda_p\subset \Delta^{n-1}$.}
\end{definition}
Note that property (iii) imposes restrictions on the behavior of the sequence $\nu_n$ only in domains $\Lambda_p^{n-1}$.

{\bf Example}. Consider the unit cube $\square^n\subset \R^n_+$
given by the inequalities $0~\leq  l_i~\leq~ 1$ for $i=1, \dots, n$.
Let $\chi_n$ be the probability measure on $\R^n_+$
supported on $\square^n\subset \R^n_+$ such that the restriction
$\chi_n|_{\square^n}$ is the Lebesgue measure, $\chi_n(\square^n)=1$.
Consider the sequence of induced measures \begin{eqnarray}\label{induced}\nu_n=q_\ast(\chi_n)\end{eqnarray}
on simplices $\Delta^{n-1}$ where $q: \R_+^n\to \Delta^{n-1}$ is the normalization map
$q(\ell) = t\ell$ where $t=(l_1+\cdots+ l_n)^{-1}.$ The measures $\nu_n$ have a very clear geometric meaning: it is the probability
distribution in the case when
the bar lengths $l_i$ are independent and are
uniformly distributed in the unit interval $[0,1]$, see \cite{FK}. The goal of the following arguments is to show that the sequence (\ref{induced}) is admissible.

It is easy to see that
$$\nu_n=f_n\mu_n$$ where $f_n: \Delta^{n-1}\to \R$ is given by
\begin{eqnarray}\label{stama}
f_n(\ell)= k_n \cdot |\ell|^{-n}, \quad \ell\in \Delta^{n-1}, \quad k_n\in \R.\end{eqnarray}
Indeed, consider coordinates $0\leq y_1, \dots, y_n\leq 1$ in $\square^n$ and another coordinate system in $\square^n$ is given by variables $z_1, \dots, z_{n-1}, t$ satisfying
$z_i\geq 0$, $z_1+ \dots+z_{n-1}\leq 1$ and $0\leq t\leq |z|^{-1}$, where $z=(z_1, \dots, z_n)$ and $z_n$ is an
auxiliary variable given by $z_n=1-z_1-\dots -z_{n-1}$.
These two coordinate systems are related by $y_i=tz_i$, where $i=1, \dots, n$.
A simple calculation shows that
$$dy_1\wedge dy_2 \wedge \dots dy_n = t^{n-1} dz_1\wedge \dots dz_{n-1}\wedge dt$$
leading to formula (\ref{stama}).
The constant $k_n$ which appears in (\ref{stama}) can be found (using (ii) of Definition \ref{admissible}) from the equation
\begin{eqnarray}\label{kn}
k_n^{-1}= \int_{\Delta^{n-1}}|\ell|^{-n}d\mu_n.\end{eqnarray}

If $\ell\in \Lambda^{n-1}_p$ then $f_n(\ell) \leq k_n\cdot (2p)^n$. We can represent $\Lambda_p^{n-1}$ as the union
$\Lambda_p^1\cup \cdots \cup \Lambda_p^n$ where $$\Lambda_p^i=\{(l_1, \dots, l_n)\in \Delta^{n-1}; \, \,  l_i\geq (2p)^{-1}\}, \quad i=1, \dots, n.$$
Clearly, $\mu_n(\Lambda_p^i)= \left(\frac{2p-1}{2p}\right)^{n-1}$ and hence
$\mu_n(\Delta^{n-1} - \Lambda_p) \geq 1 - n\left(\frac{2p-1}{2p}\right)^{n-1}.$
Using (\ref{kn}) we find that
$k_n^{-1} \geq (2p)^n \cdot \left( 1- n \left(\frac{2p-1}{2p}\right)^{n-1}\right)$. This shows that the sequence
$(2p)^nk_n$ remains bounded
as $n\to \infty$ implying (iii) of Definition \ref{admissible}. Hence, the sequence of measures $\{\nu_n\}$ is admissible.

Next we state main theorems of this paper.

\begin{theorem}\label{thmain2} Fix an admissible sequence of probability measures $\nu_n$ and an integer
$p\geq 0$, and consider the $2p$-dimensional Betti number (\ref{avbn}) of polygon spaces $N_\ell$ in $\R^3$ as a random variable on $\Delta^{n-1}$,
 for large $n\to \infty$.
 Then there exist constants $C>0$ and $0<a<1$ (depending on the sequence of measures $\nu_n$ and on the number $p$ but independent of $n$)
such that the average Betti numbers (\ref{avbn}) satisfy
\begin{eqnarray}
\left| \int\limits_{\Delta^{n-1}}b_{2p}(N_\ell)d\nu_n - \sum_{i=0}^p\binom{n-1}{i}\right| < C\cdot a^n
\end{eqnarray}
for all $n$.
\end{theorem}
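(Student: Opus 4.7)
The plan is to decompose $\Delta^{n-1}$ as $(\Delta^{n-1}\setminus\Lambda_p)\sqcup\Lambda_p$, show that $b_{2p}(N_\ell)$ identically equals $\sum_{i=0}^p\binom{n-1}{i}$ on the bulk region, and use admissibility to bound the contribution of the exceptional set $\Lambda_p$ exponentially. Throughout I assume $n$ is large enough that $2p<n-3$, so we are in the stable range.

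For the identification on the bulk I would invoke the Klyachko / Hausmann--Knutson combinatorial description of $H^\ast(N_\ell)$, which at generic $\ell$ expresses $b_{2p}(N_\ell)$ as a count of \emph{short} subsets $J\subseteq\{1,\dots,n\}$ (those with $\sum_{i\in J}l_i<1/2$) of a specified cardinality involving $p$ and containing a distinguished index. Two consequences are used: a uniform polynomial upper bound $b_{2p}(N_\ell)\leq P_p(n)$ valid for all generic $\ell$, and the observation that when $\ell\in\Delta^{n-1}\setminus\Lambda_p$ one has $|\ell|<1/(2p)$, so every subset $J$ of cardinality at most $p$ satisfies $\sum_{i\in J}l_i<p\cdot(2p)^{-1}=1/2$ automatically. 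Hence the shortness constraint becomes vacuous, Klyachko's count degenerates to a pure binomial sum, and $b_{2p}(N_\ell)=\sum_{i=0}^p\binom{n-1}{i}$ identically on the bulk, independent of $\ell$.

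On $\Lambda_p$, admissibility provides $f_n\leq Ab^n$ for suitable $A,b$ depending on $p$. Combined with the bound $\mu_n(\Lambda_p)\leq n(1-(2p)^{-1})^{n-1}$ noted in the introduction, this yields
\[
\nu_n(\Lambda_p)\ \leq\ An\cdot b\cdot\bigl(b\,(1-(2p)^{-1})\bigr)^{n-1},
\]
which decays geometrically in $n$ once $b$ is chosen small enough that $b(1-(2p)^{-1})<1$ (permissible since the admissibility constants may depend on $p$, as illustrated by the cube example in the paper). Assembling the two regions,
\[
\int_{\Delta^{n-1}} b_{2p}(N_\ell)\,d\nu_n \;=\; \Bigl(\sum_{i=0}^p\binom{n-1}{i}\Bigr)\bigl(1-\nu_n(\Lambda_p)\bigr)+\int_{\Lambda_p} b_{2p}(N_\ell)\,d\nu_n,
\]
whose deviation from $\sum_{i=0}^p\binom{n-1}{i}$ is controlled by $\bigl(\sum_{i=0}^p\binom{n-1}{i}+P_p(n)\bigr)\nu_n(\Lambda_p)=O(a^n)$ for some $0<a<1$, as required.

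The main obstacle is the collapse in Step~2: verifying that in the stable range $2p<n-3$ and under $|\ell|<1/(2p)$ Klyachko's count really does reduce to $\sum_{i=0}^p\binom{n-1}{i}$. This requires a careful unpacking of the formula (including its Poincar\'e-dual contributions, which in this regime are trivially bounded because long subsets of cardinality $\geq n-p$ are automatic as well) together with a routine extension from generic to nongeneric $\ell$ inside the bulk (the nongeneric locus being Lebesgue-null).
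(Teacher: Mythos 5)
There is a genuine gap, and it sits exactly where the paper's main technical work lives. Your exceptional set is $\Lambda_p$ (or $\Lambda_{p+2}$ after fixing the off-by-two in the cardinalities, which is a minor point), and your error term requires $\nu_n(\Lambda_p)$ to be exponentially small. But admissibility only gives $f_n\leq A\,b^n$ on $\Lambda_p$ with \emph{some} $b<2$ determined by the measure sequence --- you do not get to ``choose $b$ small enough''. Since $\mu_n(\Lambda_p)\leq n\bigl(\tfrac{2p-1}{2p}\bigr)^{n-1}$ decays only at rate $\bigl(\tfrac{2p-1}{2p}\bigr)^n$, the product $b^n\cdot\bigl(\tfrac{2p-1}{2p}\bigr)^n$ blows up whenever $b>\tfrac{2p}{2p-1}$, which is allowed for every $p\geq 2$. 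Concretely, one can build an admissible sequence with $f_n=\tfrac12(1.5)^n$ on a set $S_n\subset\Lambda_2\cap\Gamma_{10}$ of Lebesgue measure $(1.5)^{-n}$ and $f_n\approx\tfrac12$ elsewhere; then $\nu_n(\Lambda_2)\geq\tfrac12$ for all $n$, so your bound $\bigl(\sum_i\binom{n-1}{i}+P_p(n)\bigr)\nu_n(\Lambda_p)$ is of polynomial size, not $O(a^n)$. (The theorem still holds for this measure, because the spike lies where the Betti number already equals the binomial sum --- which is precisely why your decomposition is the wrong one.)

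The paper's fix is to take as bulk region the larger set $\Gamma_{p+2}$ of length vectors for which every subset of cardinality $p+2$ is short. This contains your bulk $\Delta^{n-1}\setminus\Lambda_{p+2}$, the Betti number is still constant on it (Proposition \ref{constant1}), and its complement is still contained in $\Lambda_{p+2}$ so the density bound $f_n\leq Ab^n$ applies there; but crucially $\mu_n(\Delta^{n-1}-\Gamma_{p+2})\leq n^{2p+4}\cdot 2^{-n}$, with decay rate $2^{-n}$ rather than $\bigl(\tfrac{2p+3}{2p+4}\bigr)^n$. That $2^{-n}$ is exactly what the condition $b<2$ in the definition of admissibility is calibrated against, and establishing it is the content of Lemma \ref{lmrx}, Corollary \ref{corrx} and Propositions \ref{prop3}--\ref{prop4} (the volume of a frustum of a simplex cut by the hyperplane $\sum_{i\in J}l_i=\sum_{i\notin J}l_i$). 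Your proposal omits this volume estimate entirely, and without it the argument does not close. The rest of your outline (the collapse of the Klyachko/Hausmann--Knutson count on the bulk, the polynomial upper bound on $b_{2p}(N_\ell)$, the final assembly) matches the paper and is sound.
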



\begin{theorem}\label{thmain1} Fix an admissible sequence of probability measures $\nu_n$ and an integer
$p\geq 0$, and consider the average $p$-dimensional Betti number (\ref{avml}) of planar polygon spaces
 for large $n\to \infty$.
 Then there exist constants $C>0$ and $0<a<1$ (depending on the sequence of measures $\nu_n$ and on the number $p$ but independent of $n$)
such that
\begin{eqnarray}
\left| \int\limits_{\Delta^{n-1}}b_p(M_\ell)d\nu_n - \binom{n-1}{p}\right| < C\cdot a^n
\end{eqnarray}
for all $n$.
\end{theorem}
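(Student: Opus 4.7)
The plan follows the strategy signalled in the introduction: identify a subregion $\Omega_n \subset \Delta^{n-1}$ on which $b_p(M_\ell)$ is pointwise equal to $\binom{n-1}{p}$, and use the admissibility hypothesis to control the integral over its complement. The natural candidate is
$$\Omega_n \;=\; \Delta^{n-1}\setminus \Lambda_q^{n-1} \;=\; \{\ell\in \Delta^{n-1}\: |\ell|<1/(2q)\}$$
for a suitable integer $q$ depending on $p$ (for instance $q=p+2$).

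First I would invoke the Farber-Schütz combinatorial formula \cite{FS}, which for generic $\ell$ expresses $b_p(M_\ell)$ as a sum of counts of \emph{short} subsets $J\subseteq \{1,\dots,n\}$ (subsets with $\sum_{i\in J}l_i<1/2$) containing a distinguished maximal index, indexed by the cardinality of $J$. On $\Omega_n$, every subset of size $\leq q$ has weight $<q/(2q)=1/2$ and is therefore short, while every subset of size $\geq n-q$ has complement of size $\leq q$ with weight $<1/2$, forcing the subset itself to be long. Substituting these into the formula, the ``small-cardinality'' contribution collapses to the full binomial count $\binom{n-1}{p}$ and the ``large-cardinality'' (Poincaré-dual) contribution vanishes, so $b_p(M_\ell)=\binom{n-1}{p}$ identically on the generic part of $\Omega_n$.

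To bound the exceptional contribution, I combine the Lebesgue-measure estimate $\mu_n(\Lambda_q^{n-1}) \leq n(1-1/(2q))^{n-1}$ derived in the excerpt with the admissibility bound $f_n \leq A\cdot b^n$ on $\Lambda_q^{n-1}$ (some $b<2$ by property (iii) applied to $q$), obtaining $\nu_n(\Lambda_q^{n-1})=O(c^n)$ with $c<1$ after calibrating $q$ against the admissibility exponent. The crude polynomial bound $b_p(M_\ell)=O(n^{p+1})$, valid uniformly since $b_p$ counts subsets, then yields $\int_{\Lambda_q^{n-1}} b_p(M_\ell)\,d\nu_n = O(a^n)$ for some $0<a<1$. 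Assembling the two pieces,
$$\int_{\Delta^{n-1}} b_p(M_\ell)\,d\nu_n \;=\; \binom{n-1}{p}\cdot \nu_n(\Omega_n) \;+\; \int_{\Lambda_q^{n-1}} b_p(M_\ell)\,d\nu_n \;=\; \binom{n-1}{p} + O(a^n),$$
which is the desired estimate.

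The principal obstacle is the combinatorial step: checking that the Farber-Schütz formula collapses on $\Omega_n$ to exactly $\binom{n-1}{p}$ requires careful bookkeeping of the several summands and the correct choice of distinguished index (to make the ``large'' Poincaré-dual term actually vanish). A secondary technical point is the quantitative matchup in the exceptional bound, where one must arrange $b\cdot(1-1/(2q))<1$; this may need either a sharpened inclusion-exclusion estimate of $\mu_n(\Lambda_q^{n-1})$ beyond the naive union bound, or an optimal choice of $q$ relative to the admissibility exponent $b=b(q)$.
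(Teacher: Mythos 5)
Your combinatorial step is sound and is essentially the paper's Proposition~\ref{constant}: on the generic part of $\Gamma_q=\{\ell:\text{every $q$-subset is short}\}$ with $q=p+2$, the Farber--Sch\"utz formula collapses to $b_p(M_\ell)=\binom{n-1}{p}$ (and your observation that $\Delta^{n-1}\setminus\Lambda_q\subset\Gamma_q$ is correct). The gap is in the measure-theoretic decomposition, and it is not the ``secondary technical point'' you describe but the crux of the theorem. You take the exceptional set to be $\Lambda_q$ itself and try to show $\nu_n(\Lambda_q)=O(a^n)$ from $\mu_n(\Lambda_q)\le n(1-\tfrac{1}{2q})^{n-1}$ together with $f_n\le Ab^n$. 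This product is $\sim n\bigl(b(1-\tfrac1{2q})\bigr)^n$, and admissibility only guarantees $b<2$, whereas you would need $b<\tfrac{2q}{2q-1}$. Neither of your proposed repairs can work: each $\Lambda_q^i$ is an exact homothet of $\Delta^{n-1}$, so $\mu_n(\Lambda_q)\ge(1-\tfrac1{2q})^{n-1}$ and no sharpened inclusion--exclusion improves the exponential rate; and increasing $q$ pushes $(1-\tfrac1{2q})$ toward $1$ while enlarging $\Lambda_q$, so the admissible exponent $b(q)$ cannot decrease. Worse, the claim $\nu_n(\Lambda_q)=O(a^n)$ is simply false for some admissible sequences: one can put all of the mass of $\nu_n$ on a subset $S_n\subset\Lambda_2\cap\Gamma_2$ of Lebesgue measure $\asymp n^{-1}(3/4)^{n-1}$ with density $f_n\asymp n^{-1}(4/3)^{n-1}\le A(4/3)^n$ (and $S_n$ disjoint from $\Lambda_1$), which satisfies Definition~\ref{admissible} yet has $\nu_n(\Lambda_2)=1$ for every $n$. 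Your identity $\int b_p\,d\nu_n=\binom{n-1}{p}\nu_n(\Omega_n)+\int_{\Lambda_q}b_p\,d\nu_n$ then gives no information, since $\nu_n(\Omega_n)=0$.

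The paper's route avoids this by taking the good set to be $\Gamma_{p+2}$ rather than $\Delta^{n-1}\setminus\Lambda_{p+2}$. The complement $\Delta^{n-1}\setminus\Gamma_{p+2}$ is still contained in $\Lambda_{p+2}$ (if some $(p+2)$-subset has weight $\ge 1/2$, one of its coordinates is $\ge\tfrac1{2p+4}$), so the density bound $f_n\le Ab^n$ applies there; but its \emph{Lebesgue} measure is far smaller than that of $\Lambda_{p+2}$: by the frustum-volume computation (Lemma~\ref{lmrx}, Corollary~\ref{corrx}, Propositions~\ref{prop3} and~\ref{prop4}) it is at most $n^{2p+4}\cdot 2^{-n}$. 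The factor $2^{-n}$ is exactly what beats $b^n$ for \emph{every} $b<2$, which is why the admissibility condition is stated with the threshold $2$. This volume estimate for $\Delta^{n-1}\setminus\Gamma_{p+2}$ is the missing ingredient in your argument; with it in place (together with the polynomial bound on $b_p(M_\ell)$, which should be $O(n^{p+2})$ rather than $O(n^{p+1})$ because of the Poincar\'e-dual term $a_{n-3-p}\le\binom{n-1}{p+2}$, a harmless correction), your assembly of the two pieces goes through verbatim.
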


\section{Volume of the frustum of a simplex}

In this section we obtain a formula for the volume of the intersection of a simplex with a half-space. General formulae of this kind are well known, see
\cite{CS}, \cite{A}, \cite{Va}, \cite{FK}. However in this paper we have to consider a highly degenerate situation when the functional
determining the half-space  takes only two distinct values of the set of vertices. We give an explicit formula and its proof in this special case.

\begin{lemma}\label{lmrx} Let $b_0, \dots, b_n\in \R^n$ be vertices of a simplex $\Delta^n\subset \R^n$. Let $\phi: \R^n\to \R$ be an affine functional such that
$\phi(b_i) = -1$ for $i=0, \dots, p-1$ and $\phi(b_i)=1$ for $i=p, p+1, \dots, n$. For $x\in \R$ denote by $H_x$ the half-space
$H_x=\{v\in \R^n; \phi(v)\leq x\}$. Then the ratio $$r(x) = \frac{\vol(H_x\cap \Delta)}{\vol(\Delta)}$$
 for $x\in [-1, 1]$ is given by
\begin{eqnarray}\label{rx}
r(x) = \left(\frac{x+1}{2}\right)^{q}\cdot \sum_{k=0}^{p-1}
\, \binom{q-1+k}{q-1}\cdot
\left(\frac{1-x}{2}\right)^k.
\end{eqnarray}
Here $q= n-p+1$ denotes the multiplicity of value $1=\phi(b_i)$.
\end{lemma}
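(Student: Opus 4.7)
The key simplification is that $\phi$ takes only two values on the vertex set, so I would work in barycentric coordinates on $\Delta^n$. Every point $v\in\Delta^n$ is uniquely $v=\sum_{i=0}^n t_i b_i$ with $t_i\geq 0$, $\sum t_i =1$, and affinity of $\phi$ gives
\begin{equation*}
\phi(v)\;=\;-(t_0+\cdots+t_{p-1})+(t_p+\cdots+t_n)\;=\;1-2s,\qquad s:=t_0+\cdots+t_{p-1}.
\end{equation*}
So $\phi(v)\leq x$ is equivalent to $s\geq s_0:=(1-x)/2$, and $r(x)$ is exactly the fraction of normalized Lebesgue measure on $\Delta^n$ supported on the set $\{s\geq s_0\}$.

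Slicing $\Delta^n$ by the hyperplanes $\{s=\mathrm{const}\}$ presents each slice as an affine product of a $(p-1)$-simplex scaled by $s$ and a $(q-1)$-simplex scaled by $1-s$; after the standard Jacobian computation the marginal density of $s$ on $[0,1]$ is the Beta$(p,q)$ density $\tfrac{n!}{(p-1)!\,(q-1)!}\,s^{p-1}(1-s)^{q-1}$. Setting $y:=(1+x)/2=1-s_0$ and substituting $u=1-s$,
\begin{equation*}
r(x)\;=\;\frac{n!}{(p-1)!\,(q-1)!}\int_{0}^{y} u^{q-1}(1-u)^{p-1}\,du.
\end{equation*}

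To identify this incomplete Beta integral with the closed form (\ref{rx}), I would induct on $p$, regarding $q$ as a free parameter (so the ambient dimension $n=p+q-1$ grows with $p$). The base case $p=1$ is a one-line integration giving $y^q$, which matches the single term $\binom{q-1}{q-1}y^q$ on the right-hand side. For the inductive step, a single integration by parts — differentiating $(1-u)^{p-1}$ and antidifferentiating $u^{q-1}$ — produces the recurrence
\begin{equation*}
r_{p,q}(x)\;=\;\binom{n}{q}\,y^q(1-y)^{p-1}\;+\;r_{p-1,\,q+1}(x).
\end{equation*}
Applying the inductive hypothesis to $r_{p-1,q+1}$, then multiplying through by $y=1-(1-y)$ in the resulting sum and collecting powers of $(1-y)$, converts the identity into an instance of Pascal's rule $\binom{q+k}{q}-\binom{q+k-1}{q}=\binom{q+k-1}{q-1}$; the top-order term $(1-y)^{p-1}$ is handled by one further application of Pascal, $\binom{n}{q}-\binom{q+p-2}{q}=\binom{q+p-2}{q-1}$.

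No step presents a real obstacle; the only mildly delicate bookkeeping is the Pascal telescoping that closes the induction.
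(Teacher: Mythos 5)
Your plan is correct and would yield a complete proof, but it takes a genuinely different route from the paper. The paper does not compute the volume directly: it quotes from the Curry--Schoenberg spline theory that $r(x)$ is a degree-$n$ polynomial on $[-1,1]$ which is $C^{n-p}$ at $x=-1$ and $C^{n-q}$ at $x=1$, deduces the vanishing conditions $r^{(j)}(-1)=0$ for $j\le n-p$ and $(r-1)^{(j)}(1)=0$ for $j\le n-q$, and then observes that the unique polynomial satisfying all of these is $(x+1)^{q}$ times the degree-$(p-1)$ Taylor truncation of $(x+1)^{-q}$ at $x=1$, which is exactly (\ref{rx}). You instead reduce everything to the marginal law of $s=t_0+\cdots+t_{p-1}$ in barycentric coordinates --- correctly identified as the ${\rm Beta}(p,q)$ density $\frac{n!}{(p-1)!(q-1)!}s^{p-1}(1-s)^{q-1}$ --- and evaluate the resulting incomplete Beta integral by induction on $p$; I checked that your integration-by-parts recurrence $r_{p,q}=\binom{n}{q}y^q(1-y)^{p-1}+r_{p-1,q+1}$ is exact (the constants match because $n=p+q-1$ is preserved) and that the Pascal telescoping closes the induction, including the top coefficient $\binom{p+q-1}{q}-\binom{p+q-2}{q}=\binom{p+q-2}{q-1}$. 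Your argument is longer in bookkeeping but entirely self-contained and elementary, whereas the paper's is shorter and conceptually tied to spline functions at the price of importing the regularity facts (a)--(c) from the literature; you could also shortcut your final induction by citing the classical identity expressing the incomplete Beta function as a tail of the negative binomial series.
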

\begin{proof}
The function $r(x)$ is closely related to spline functions which were introduced in \cite{CS}, see \S 3.
Clearly $r(x)$ vanishes for $x\le -1$ and $r(x)$ is identically $1$ for $x\ge 1$.
Moreover, from geometrical considerations
(see \cite{CS}, \S 3)  we know that:

(a) $r(x)$ is a polynomial of degree $n$ for $x\in [-1, 1]$,

(b) $r(x)$ is $C^{n-p}$ near $-1$,

(c) $r(x)$ is $C^{n-q}$ near $1$.

Hence we obtain that \begin{eqnarray}\label{plusone}
r^{(j)}(-1)=0 \quad \mbox{for}\quad j=0, 1, \dots, n-p\end{eqnarray}
and
\begin{eqnarray}\label{minusone}(r(x)-1)^{(j)}(1)=0 \quad\mbox{for}\quad j =0, 1, \dots, n-q.\end{eqnarray}
Requirement (\ref{plusone}) implies that $r(x) = (x+1)^{n-p+1}g(x)$
where $g(x)$ is polynomial of degree $p-1$. To satisfy (\ref{minusone}) we must take for $g(x)$ the sum of terms
 of degree $\leq p-1$
of Taylor expansion at $x=1$ of the rational function $(x+1)^{p-n-1}$.
Computing this Taylor expansion we obtain (\ref{rx}). \end{proof}
\begin{corollary}\label{corrx} Under conditions of Lemma \ref{lmrx} the number $r_{p,q}=r(0)$ is given by
\begin{eqnarray}\label{r0}
r_{p,q} = 2^{-q}\cdot \sum_{k=0}^{p-1}
\, \binom{q-1+k}{k}\cdot  2^{-k}
\end{eqnarray}
\end{corollary}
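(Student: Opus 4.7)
The corollary is essentially an evaluation statement: it simply asks for the value of the function $r(x)$ from Lemma \ref{lmrx} at the specific point $x=0$. The plan is therefore a one-line substitution followed by a cosmetic rewriting of one binomial coefficient.

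First, I would take the formula
\[
r(x) = \left(\frac{x+1}{2}\right)^{q}\cdot \sum_{k=0}^{p-1}\binom{q-1+k}{q-1}\left(\frac{1-x}{2}\right)^{k}
\]
supplied by Lemma \ref{lmrx} and set $x=0$. Both parenthesized quantities become $1/2$, and the $k$-th term acquires the factor $(1/2)^{q+k}$. Pulling the common $2^{-q}$ out of the sum yields
\[
r(0)=2^{-q}\sum_{k=0}^{p-1}\binom{q-1+k}{q-1}\cdot 2^{-k}.
\]

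Second, to match the form stated in (\ref{r0}) I would replace $\binom{q-1+k}{q-1}$ by $\binom{q-1+k}{k}$, which is the standard symmetry of binomial coefficients (the lower index $q-1$ and its complement $k$ sum to the upper index $q-1+k$). This is a purely notational step and does not change the value.

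There is no genuine obstacle here: the geometric content of the corollary is entirely contained in Lemma \ref{lmrx}, and the only thing to verify is that the substitution and the symmetry rewrite produce the stated formula (\ref{r0}). The brevity of the argument is the whole point — this corollary is being isolated for later reference, where the specific cut $x=0$ (corresponding to the hyperplane $H_J$ defining a chamber wall in the simplex $\Delta^{n-1}$) will play the central role.
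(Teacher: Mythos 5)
Your proposal is correct and coincides with the paper's (implicit) derivation: the corollary is just the evaluation of formula (\ref{rx}) at $x=0$, with the rewriting $\binom{q-1+k}{q-1}=\binom{q-1+k}{k}$. Nothing further is needed.
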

The number $r_{p,q}$ has a clear geometric meaning. Divide the set of vertices of $\Delta^{n}$ into two subsets, one containing $p$ elements and another containing $q$ elements, here $p+q=n+1$. Let $F$ and $F'$ denote simplices generated by each of these subsets. Any point of $\Delta^{n}$ lies on a unique segment connecting a point of $F$ with a point of $F'$. Then $r_{p,q}$ is the relative volume of the set of points $x\in \Delta^{n}$ such that $x=(1-t)a+ta'$ where $a \in F$, $a'\in F'$ and $t\in [0,1/2]$.

\section{Proof of Theorem \ref{thmain2}}

We start by recalling the description of Betti numbers of polygon spaces $N_\ell$ given by A.A. Klyachko \cite{Kl} and by
 J.-Cl. Hausmann and A. Knutson \cite{HK}.  A subset $J\subset
\{1, \dots, n\}$ is called {\it short} if
\begin{eqnarray*}
\sum_{i\in J}l_i \, < \, \sum_{i\notin J}l_i.\end{eqnarray*} A
subset is called {\it long} if its complement is short.
Given a generic length vector we denote by $\alpha_j(\ell)$ the number of subsets $J\subset \{1, \dots, n-1\}$ such that $|J|=j$ and the set $J\cup \{n\}$ is short.
The result of Corollary 4.3 from \cite{HK} can be equivalently stated as
\begin{eqnarray}\label{bettin}
b_{2p}(N_\ell) = \sum_{j=0}^p \left[\alpha_j(\ell) - \alpha_{n-j-2}(\ell)\right].
\end{eqnarray}

We will need the following upper bound.

\begin{proposition}\label{upperbound1}
For a generic length vector $\ell=(l_1, \dots, l_n)\in \Delta^{n-1}$ with $n\geq 3$ one has \begin{eqnarray}b_{2p}(N_\ell) \leq 2\cdot (n-1)^p.\end{eqnarray}
\end{proposition}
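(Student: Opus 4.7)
The plan is to bound Betti numbers using the Hausmann--Knutson formula (\ref{bettin}) and then reduce the problem to a routine estimate on a partial sum of binomial coefficients. Since each $\alpha_k(\ell)$ is a cardinality, it is non-negative, so dropping the subtracted terms in (\ref{bettin}) already gives
\[
b_{2p}(N_\ell) \;\leq\; \sum_{j=0}^{p} \alpha_j(\ell).
\]
Moreover, $\alpha_j(\ell)$ counts a subfamily of the $j$-element subsets of $\{1,\dots,n-1\}$, so $\alpha_j(\ell) \leq \binom{n-1}{j}$. Combining these two remarks reduces the statement to showing that
\[
\sum_{j=0}^{p}\binom{n-1}{j} \;\leq\; 2\,(n-1)^p
\]
for all $n\geq 3$ and $p\geq 0$, with the convention that $b_{2p}(N_\ell)=0$ when $2p$ exceeds $\dim N_\ell = 2(n-3)$.

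I would then dispose of the binomial bound by a short case split. When $p \leq (n-1)/2$, the sequence $\binom{n-1}{j}$ is non-decreasing for $j\leq p$, so
\[
\sum_{j=0}^{p}\binom{n-1}{j} \;\leq\; (p+1)\binom{n-1}{p} \;\leq\; \frac{p+1}{p!}\,(n-1)^p.
\]
A direct check shows $(p+1)/p! \leq 2$ for every integer $p\geq 0$ (the maximum value $2$ occurs at $p=0$ and $p=1$), which gives the desired inequality. When $p > (n-1)/2$, either $p > n-3$, in which case $b_{2p}(N_\ell)=0$ and there is nothing to prove, or $n/2 \leq p \leq n-3$, forcing $n\geq 6$. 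In the latter subcase I would use the crude bound $\sum_{j=0}^p \binom{n-1}{j} \leq 2^{n-1}$ and observe that $(n-1)^p \geq (n-1)^{n/2} \geq 5^{n/2} \geq 2^{n-1}$, so the inequality follows with room to spare.

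There is no real obstacle here; the only thing that requires a moment of attention is the case $p$ large compared to $n$, where one has to invoke either the dimension of $N_\ell$ or the $2^{n-1}$ row-sum bound instead of the clean monotonicity argument used for small $p$. Once this is dispatched, the proposition follows by assembling the three inequalities above.
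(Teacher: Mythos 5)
Your proof is correct and follows essentially the same route as the paper: both start from (\ref{bettin}), drop the non-negative subtracted terms $\alpha_{n-j-2}(\ell)$, bound $\alpha_j(\ell)\le\binom{n-1}{j}$, and thereby reduce the claim to the elementary inequality $\sum_{j=0}^{p}\binom{n-1}{j}\le 2(n-1)^p$. The paper finishes that inequality in one line, without any case split on the size of $p$, by writing $\binom{n-1}{j}\le (n-1)^j$ and summing the geometric series, $\sum_{j=0}^{p}(n-1)^j=\frac{(n-1)^{p+1}-1}{n-2}\le 2(n-1)^p$ for $n\ge 3$; your longer case analysis also works (only note the small slip that $(p+1)/p!$ equals $1$, not $2$, at $p=0$, though the bound $(p+1)/p!\le 2$ you actually use is still valid).
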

\begin{proof} From (\ref{bettin}) one obtains
\begin{eqnarray*}
b_{2p}(N_\ell) \leq \sum_{j=0}^p \alpha_j(\ell) \le
\sum_{j=0}^p \binom{n-1}{j}
\\ \\
\le \sum_{j=0}^p (n-1)^j = \frac{(n-1)^{p+1}-1}{n-2} \le \frac{(n-1)^{p+1}}{n-2} \le 2\cdot (n-1)^p.
\end{eqnarray*}
On the last step we used the assumption $n\geq 3$.
\end{proof}
Given an integer $p$ we denote by
\begin{eqnarray}
\Gamma_p=\Gamma_p^{n-1}\subset \Delta^{n-1}
\end{eqnarray}
the set of all length vectors $\ell=(l_1, \dots, l_n) \in \Delta^{n-1}$
such that any subset $J\subset \{1, \dots, n\}$ of cardinality
$|J|=p$ is short with respect to $\ell$. Clearly $\Gamma_p\supset \Gamma_{q}$ if $p\le q$ and, moreover, $\Gamma_p=\emptyset$ for $p\ge n/2$.

\begin{proposition}\label{constant1} If $\ell\in \Gamma_p\subset \Delta^{n-1}$ is generic then for all $q\leq p-2$ one has
\begin{eqnarray}\label{bnl}
b_{2q}(N_\ell) = \sum_{j=0}^q
\binom{n-1}{j}.
\end{eqnarray}
\end{proposition}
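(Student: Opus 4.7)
The plan is to read off $b_{2q}(N_\ell)$ from the Hausmann--Knutson formula (\ref{bettin}) by evaluating $\alpha_j(\ell)$ and $\alpha_{n-j-2}(\ell)$ explicitly on the range $0\le j\le q\le p-2$. Throughout I use that $\sum l_i=1$, so a subset $J\subset\{1,\dots,n\}$ is short iff $\sum_{i\in J}l_i<1/2$ and long iff the opposite strict inequality holds, genericity ruling out equality.

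The first step is a monotonicity observation about the hypothesis: if every size-$p$ subset of $\{1,\dots,n\}$ is short, then every subset of size at most $p$ is short as well. This follows by extending any $K$ with $|K|\le p$ to some $K'\supset K$ with $|K'|=p$ and noting that $\sum_{i\in K}l_i\le \sum_{i\in K'}l_i<1/2$.

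Using this, I next evaluate the two counts for each $j\le q\le p-2$. For $\alpha_j(\ell)$: every $J\subset\{1,\dots,n-1\}$ with $|J|=j$ produces $J\cup\{n\}$ of cardinality $j+1\le p-1\le p$, which is short by the previous step, so every such $J$ contributes and $\alpha_j(\ell)=\binom{n-1}{j}$. For $\alpha_{n-j-2}(\ell)$: the complement of $J\cup\{n\}$ in $\{1,\dots,n\}$ has cardinality $n-(n-j-1)=j+1\le p$ and is therefore short; this forces $J\cup\{n\}$ itself to be long, so $\alpha_{n-j-2}(\ell)=0$. Plugging these values into (\ref{bettin}) collapses the alternating sum to $b_{2q}(N_\ell)=\sum_{j=0}^{q}\binom{n-1}{j}$.

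The only real difficulty is bookkeeping: one has to check that the two size constraints $j+1\le p$ (appearing both for the direct sets $J\cup\{n\}$ and for the complements used in the $\alpha_{n-j-2}$ term) hold throughout the summation range. The margin $q\le p-2$ gives $j+1\le p-1\le p$ in both places with room to spare, and no further input beyond the Hausmann--Knutson formula and the monotonicity of the ``short'' property under the $\Gamma_p$ hypothesis is required.
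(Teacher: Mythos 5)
Your proof is correct and follows essentially the same route as the paper: evaluate $\alpha_j(\ell)=\binom{n-1}{j}$ and $\alpha_{n-j-2}(\ell)=0$ for $j\le q$ using the fact that in $\Gamma_p$ every subset of cardinality at most $p$ is short, then substitute into the Hausmann--Knutson formula (\ref{bettin}). (A minor bookkeeping remark: you compute the relevant complement to have cardinality $j+1$ while the paper's proof says $j+2$; either count is covered by the hypothesis $q\le p-2$, so nothing is affected.)
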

\begin{proof}
If $\ell\in \Gamma_p$ and $j\leq p-1$ then $\alpha_j(\ell) =\binom{n-1}{j}$ since any subset of cardinality $j+1\leq p$ is short with respect to $\ell$.
Similarly, for $j\le p-2$ one has $\alpha_{n-2-j}(\ell)=0$. Indeed, $\alpha_{n-2-j}(\ell)$ coincides with the number of long subsets $J\subset \{1, \dots, n-1\}$
of cardinality $j+2$ and (as follows from our assumption $j+2\leq p$) all such subsets are short. Formula (\ref{bnl}) now follows from (\ref{bettin}).
\end{proof}

Our next goal is to show that for a fixed $p$ the Lebesgue measure of
the set $\Gamma_p$ is exponentially close to the measure of the whole
simplex $\Delta^{n-1}$ as $n\to \infty$. More precisely, we will prove the
following statement:
\begin{proposition}\label{prop3} One has
\begin{eqnarray}\label{ineq3}
1- n^{2p} \cdot 2^{-n}\, \le \, \, \frac{\vol(\Gamma_p)}{\vol(\Delta^{n-1})}\, \, \le \,  1
\end{eqnarray}
\end{proposition}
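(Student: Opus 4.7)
The upper bound $\vol(\Gamma_p)/\vol(\Delta^{n-1})\leq 1$ is immediate from the inclusion $\Gamma_p\subseteq \Delta^{n-1}$, so the plan concentrates on the lower bound, i.e., on estimating the relative measure of the complement $\Delta^{n-1}\setminus \Gamma_p$.

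First I would translate the defining condition of $\Gamma_p$ into a form that matches Lemma \ref{lmrx}. Since $l_1+\cdots+l_n=1$ on $\Delta^{n-1}$, a subset $J\subset\{1,\dots,n\}$ of size $p$ fails to be short at $\ell$ precisely when $\sum_{i\in J} l_i\geq 1/2$. Hence
\[
\Delta^{n-1}\setminus \Gamma_p \;=\; \bigcup_{|J|=p} B_J, \qquad B_J \;=\; \bigl\{\ell\in\Delta^{n-1} : \textstyle\sum_{i\in J} l_i \geq \tfrac12\bigr\}.
\]

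For a fixed $J$ of cardinality $p$, I would then apply Lemma \ref{lmrx} with the affine functional $\phi(\ell)=\sum_{i\notin J} l_i-\sum_{i\in J} l_i$, which takes the value $-1$ on the $p$ vertices of $\Delta^{n-1}$ indexed by $J$ and $+1$ on the remaining $n-p$ vertices. Since $B_J$ coincides with the slice $\{\phi\leq 0\}\cap \Delta^{n-1}$, Corollary \ref{corrx} yields $\mu_n(B_J)=r_{p,n-p}$. By symmetry and the union bound over the $\binom{n}{p}$ choices of $J$,
\[
\mu_n(\Delta^{n-1}\setminus \Gamma_p) \;\leq\; \binom{n}{p}\, r_{p,n-p}.
\]

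The remaining task is to estimate this right-hand side by $n^{2p}\,2^{-n}$. Using $\binom{n}{p}\leq n^p$ together with the crude bound $\binom{n-p-1+k}{k}\leq n^k$ in each term of the explicit formula for $r_{p,n-p}$ of Corollary \ref{corrx}, the inner sum becomes a truncated geometric series and yields $r_{p,n-p}\leq C_p\,n^{p-1}\,2^{-n}$ for a constant $C_p$ depending only on $p$. Multiplying gives a bound of order $n^{2p-1}\,2^{-n}$, which is dominated by $n^{2p}\,2^{-n}$ once $n$ is large enough to absorb the constant $C_p$ into the extra factor of $n$. The only real obstacle is this last arithmetic step, namely matching the stated exponent $2p$ uniformly in $n$; the geometry of the problem has already been packaged inside Lemma \ref{lmrx}, and no further topological or measure-theoretic input is required.
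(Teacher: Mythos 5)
Your proposal is correct and follows essentially the same route as the paper: the same decomposition of $\Delta^{n-1}\setminus\Gamma_p$ into the slices cut off by the functionals $\phi_J$, the same appeal to Lemma~\ref{lmrx}/Corollary~\ref{corrx} to evaluate each slice as $r_{p,n-p}$, and the same union bound over the $\binom{n}{p}\le n^p$ subsets. The only cosmetic difference is in the final arithmetic: the paper bounds each term by $n^p\cdot 2^{-n}$ directly (for $n>4$), whereas you get $C_p\,n^{p-1}\,2^{-n}$ and absorb the constant into the spare factor of $n$; both land on $n^{2p}\cdot 2^{-n}$.
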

\begin{proof}
Let $J\subset \{1, \dots, n\}$ be a subset with $|J|=p$. Let $L\subset \R^n$ be the affine subspace given by the equation $l_1+\dots+l_n=1$. Denote by
$\phi_J: L\to \R$ the affine functional
\begin{eqnarray}\label{phij}
\phi_J(l_1, \dots, l_n) = \sum_{i\in J}l_i - \sum_{i\notin
J}l_i.
\end{eqnarray}
Denote by $V_J\subset \Delta^{n-1}$ the set of vectors
$\ell\in \Delta^{n-1}$ satisfying the inequality $\phi_J(\ell)
\geq 0$. It is clear that
\begin{eqnarray}\label{intermediate}
\Delta^{n-1} - \Gamma_p\, =\, \bigcup_{|J|=p} V_J
\end{eqnarray} and therefore
\begin{eqnarray}\label{equal3}
 \frac{\vol(\Delta^{n-1} - \Gamma_p)}{\vol(\Delta^{n-1})} \, \leq\,
\sum\limits_{|J|=p} \frac{\vol(V_J)}{\vol(\Delta^{n-1})}
\end{eqnarray}
Inequality (\ref{ineq3}) now follows by combining (\ref{equal3}) with the inequality of Proposition \ref{prop4} (see below)
 which allows estimating each term of (\ref{equal3}). The number of terms in the sum (\ref{equal3}) equals $\binom{n}{p}$ which is less or equal than $n^p$.
\end{proof}
\begin{proposition}\label{prop4} Assume that $n> 4$. Then for any subset $J\subset \{1, \dots, n\}$ with $|J|=p$ one has
\begin{eqnarray}\label{estim}\frac{\vol(V_J)}{\vol(\Delta^{n-1})} < n^{p}\cdot
2^{-n}.\end{eqnarray}
\end{proposition}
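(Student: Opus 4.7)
The set $V_J$ is exactly the intersection of $\Delta^{n-1}$ with the half-space $\{\phi_J\ge 0\}$, and the affine functional $\phi_J$ assumes only two values on the vertices of $\Delta^{n-1}$: $+1$ at each of the $p$ vertices $e_i$ with $i\in J$ and $-1$ at each of the remaining $n-p$ vertices. This is precisely the degenerate setting for which Lemma~\ref{lmrx} was proved, so the plan is to apply Corollary~\ref{corrx} to get an exact formula for $\vol(V_J)/\vol(\Delta^{n-1})$ and then estimate it.

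To fit the sign convention of Lemma~\ref{lmrx} I will work with $\psi:=-\phi_J$, so that $V_J=\{\psi\le 0\}\cap\Delta^{n-1}$ and $\psi$ takes value $-1$ on the $p$ vertices indexed by $J$ and $+1$ on the $n-p$ others. Applying Corollary~\ref{corrx} to $\psi$ with ambient simplex dimension $n-1$ and with the lemma's parameters set to $p$ and $q=n-p$ yields
\begin{eqnarray*}
\frac{\vol(V_J)}{\vol(\Delta^{n-1})} \; =\; r_{p,\,n-p} \; =\; 2^{-(n-p)}\sum_{k=0}^{p-1}\binom{n-p-1+k}{k}\,2^{-k}.
\end{eqnarray*}
Discarding each factor $2^{-k}\le 1$ and invoking the hockey-stick identity
\begin{eqnarray*}
\sum_{k=0}^{p-1}\binom{n-p-1+k}{k}\;=\;\binom{n-1}{p-1}
\end{eqnarray*}
immediately produces the clean bound $\vol(V_J)/\vol(\Delta^{n-1})\le 2^{p-n}\binom{n-1}{p-1}$.

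It then remains to verify the purely combinatorial inequality $2^p\binom{n-1}{p-1}<n^p$ for $n>4$. Using $\binom{n-1}{p-1}\le (n-1)^{p-1}/(p-1)!\le n^{p-1}/(p-1)!$, this reduces to $2^p/(p-1)!<n$; since the quantity $2^p/(p-1)!$ attains its maximum value $4$ at $p\in\{2,3\}$ (and is bounded by $4$ for every $p\ge 1$), the inequality holds whenever $n\ge 5$. The one subtlety deserving attention is the boundary case $p=1$, where the hockey-stick step is saturated and the bound reads $\vol(V_J)/\vol(\Delta^{n-1})=2^{1-n}$; strictness in the conclusion then has to come from $n>2$, which is of course guaranteed by $n>4$.

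The main (modest) obstacle is therefore the final combinatorial estimate, which is tight at the worst values $p\in\{2,3\}$ and thus matches the hypothesis $n>4$ exactly; everything else is an immediate consequence of Lemma~\ref{lmrx} and a single binomial identity.
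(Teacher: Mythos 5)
Your proposal is correct and follows essentially the same route as the paper: both reduce $\vol(V_J)/\vol(\Delta^{n-1})$ to the exact value $r_{p,\,n-p}$ from Corollary~\ref{corrx} and then bound that sum, using $n>4$ at the last step. The only difference is cosmetic — the paper bounds each term by $\binom{n-p-1+k}{k}\le n^k$ and sums a geometric series, whereas you collapse the sum via the hockey-stick identity to $\binom{n-1}{p-1}$ before estimating; both yield the stated inequality.
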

\begin{proof} Consider the complement $\bar J$ of $J$ and the functional $\phi_{\bar J}= - \phi_J: L\to \R$, see (\ref{phij}). Clearly
$V_J$ can be represented as
the intersection $H\cap \Delta^{n-1}$ where $H$ is the halfspace $H=\{\ell\in L; \phi_{\bar J}(\ell) \leq 0\}$.
Comparing with notation of Lemma \ref{lmrx} and Corollary \ref{corrx} we may write
\begin{eqnarray}\label{inter}
\frac{\vol(V_J)}{\vol(\Delta^{n-1})} = r_{p,n-p}=  2^{-n}\cdot \sum_{k=0}^{p-1}
\, \binom{n-p-1+k}{k}
\cdot  2^{p-k} \end{eqnarray}
The binomial coefficients can be estimated by $ \binom{n-p-1+k}{k}
\le n^k
$
and therefore
$$
\frac{\vol(V_J)}{\vol(\Delta^{n-1})} \leq 2^{p-n}\cdot \sum_{k=0}^{p-1} \left(\frac{n}{2}\right)^k \leq 2^{p-n}\cdot \frac{\left(\frac{n}{2}\right)^p}{\frac{n}{2}-1} < n^p\cdot 2^{-n}.
$$
On the last step we used the assumption $n> 4$.
This completes the proof of Proposition \ref{prop4}.
\end{proof}
\begin{proof}[Proof of Theorem \ref{thmain2}] Let $\nu_n$ be an admissible sequence of measures on $\Delta^{n-1}$, where $n=1, 2, \dots$, see Definition
\ref{admissible}. Using Proposition \ref{constant1} we obtain
\begin{eqnarray*}
\left| \int_{\Delta^{n-1}} b_{2p}(N_\ell)d\nu_n - \sum_{i=0}^p \binom{n-1}{i}
\right| = \\ \\
\left| \int\limits_{\Delta^{n-1}-\Gamma_{p+2}} b_{2p}(N_\ell)d\nu_n - \sum_{i=0}^p \binom{n-1}{i}\cdot \nu_n(\Delta^{n-1}-\Gamma_{p+2})\right|  \\ \\ \le
\left[ n^{p+1} +  \sum_{i=0}^p \binom{n-1}{i}\right] \cdot \nu_n(\Delta^{n-1}-\Gamma_{p+2}).
\end{eqnarray*}
The last inequality uses Proposition \ref{upperbound1}. If $\ell=(l_1, \dots, l_n) \in \Delta^{n-1}-\Gamma_{p+2}$
then there exists a subset $J\subset \{1, \dots, n\}$ with $|J|=p+2$ such that $\sum_{i\in J} l_i \geq 1/2$ and hence
$l_i\geq 1/(2p+4)$ for some $i\in J$. This shows that $\Delta^{n-1}-\Gamma_{p+2}\subset \Lambda_{p+2}$ (which was defined before Definition
\ref{admissible} in \S \ref{results}. Therefore we may use property (iii) from Definition \ref{admissible} to continue the previous string of inequalities
\begin{eqnarray*}
\left[ n^{p+1} +  \sum_{i=0}^p\binom{n-1}{i}\right] \cdot \nu_n(\Delta^{n-1}-\Gamma_{p+2})
\le 2n^{p+1} \cdot \mu_n(\Delta^{n-1}-\Gamma_{p+2})\cdot M_n
\end{eqnarray*}
where
\begin{eqnarray*}M_n=\max_{\ell\in \Lambda_{p+2}} f_n(\ell)\le Ab^n.\end{eqnarray*}
The constants $A>0$ and $0<b<2$ appearing here are given by Definition \ref{admissible}, see (iii).  Using Proposition \ref{prop3} we have
$\mu_n(\Delta^{n-1}-\Gamma_{p+2})\le n^{2p+4}\cdot 2^{-n}$. Hence we finally obtain
\begin{eqnarray*}
\left| \int_{\Delta^{n-1}} b_{2p}(N_\ell)d\nu_n - \sum_{i=0}^p \binom{n-1}{i}
\right|
\le 2n^{p+1} \cdot Ab^n \cdot n^{2(p+2)} \cdot 2^{-n}\\ \\
\le 2A\cdot n^{3p+5}\cdot \left(\frac{b}{2}\right)^n \le Ca^n.
\end{eqnarray*}
Here $a$ is any number satisfying $b/2<a<1$ and $C>0$ is chosen accordingly.
\end{proof}

\section{Proof of Theorem \ref{thmain1}}

First we recall the result of \cite{FS} describing Betti numbers $b_p(M_\ell)$ of planar polygon spaces as functions of the length vector $\ell$.
 Fix an index $1\leq i\leq n$ such that $l_i$ is maximal among
$l_1, \dots, l_n$. Denote by $a_p(\ell)$ the number of short
subsets $J\subset \{1, \dots, n\}$ of cardinality $|J|=1+p$
containing $i$.
A subset
$J\subset \{1, \dots, n\}$ is called {\it median} if
\begin{eqnarray*}
\sum_{i\in J} l_i=\sum_{i\notin J} l_i.\end{eqnarray*}
Denote by $\tilde a_p(\ell)$ the number of median
subsets $J\subset \{1, \dots, n\}$ containing $i$ and such that
$|J|=1+p$. Then one has
\begin{eqnarray}\label{fs}
b_p(M_\ell) = a_p(\ell) + \tilde a_p(\ell) + a_{n-3-p}(\ell).
\end{eqnarray}
 for $p=0, 1, \dots,
n-3$, see  \cite{FS}.

\begin{proposition}\label{upperbound} For a generic length vector $\ell=(l_1, \dots, l_n)\in \Delta^{n-1}$ one has \begin{eqnarray}b_p(M_\ell) \leq n^{p+2}.\end{eqnarray}
\end{proposition}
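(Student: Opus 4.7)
The plan is to mimic the proof of \proref{upperbound1}, but now working from the formula (\ref{fs}) for the Betti numbers of planar polygon spaces. First I would invoke genericity of $\ell$ to eliminate the middle term: if $\ell$ is generic then $\ell$ lies in the interior of a chamber, so no subset $J\subset\{1,\dots,n\}$ is median with respect to $\ell$, whence $\tilde a_p(\ell)=0$. One may also assume $0\le p\le n-3$, since $\dim M_\ell = n-3$ forces $b_p(M_\ell)=0$ outside this range and makes the bound trivial.

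Next I would bound the two surviving terms combinatorially. By definition, $a_p(\ell)$ is the number of short subsets of $\{1,\dots,n\}$ of cardinality $p+1$ that contain the fixed maximal index $i$; discarding the shortness condition and fixing $i$ in the subset leaves exactly $\binom{n-1}{p}$ choices, so $a_p(\ell)\le\binom{n-1}{p}$. The same argument applied to $a_{n-3-p}(\ell)$ gives $a_{n-3-p}(\ell)\le\binom{n-1}{n-3-p}=\binom{n-1}{p+2}$. Hence
\begin{eqnarray*}
b_p(M_\ell)\;\le\;\binom{n-1}{p}+\binom{n-1}{p+2}\;\le\;\frac{(n-1)^p}{p!}+\frac{(n-1)^{p+2}}{(p+2)!}.
\end{eqnarray*}

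The final step is an entirely routine numerical estimate: since $(p+2)!\ge 2$ for $p\ge 0$, each of the two summands is bounded by $n^{p+2}/2$, and the bound $b_p(M_\ell)\le n^{p+2}$ follows for all $n\ge 2$. There is no real obstacle here; the only care needed is the bookkeeping when $p$ is close to the endpoints $0$ or $n-3$ (so that one of the binomial coefficients collapses or $a_{n-3-p}$ must be interpreted as zero), and the observation that the small-$n$ cases ($n\le 2$) are vacuous because there are no generic length vectors to consider.
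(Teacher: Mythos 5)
Your proof is correct and follows essentially the same route as the paper's: genericity eliminates $\tilde a_p(\ell)$, the two remaining terms are bounded by $\binom{n-1}{p}$ and $\binom{n-1}{n-3-p}=\binom{n-1}{p+2}$, and a crude numerical estimate yields $n^{p+2}$. The only difference is cosmetic: the paper bounds the binomial coefficients directly by $(n-1)^p+(n-1)^{p+2}\le n^{p+2}$ rather than retaining the factorials.
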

\begin{proof} From the description of $b_p(M_\ell)$ given above one deduces that for a generic $\ell$ the number $\tilde a_p(\ell)$ vanishes and
$a_p(\ell)$ (see (\ref{fs})) is less or equal than the binomial coefficient $\left(\begin{array}{c}  n-1\\ p\end{array} \right)$.
Hence using (\ref{fs}) we obtain
$$b_p(M_\ell) \le \left(\begin{array}{c}  n-1\\ p\end{array} \right) + \left(\begin{array}{c}  n-1\\ n-3- p\end{array} \right) \le (n-1)^p + (n-1)^{p+2}\le n^{p+2}.$$ This completes the proof.
\end{proof}

\begin{proposition}\label{constant} If $\ell\in \Gamma_p\subset \Delta^{n-1}$ is generic then \begin{eqnarray}
b_j(M_\ell)= \left(\begin{array}{c}  n-1\\ j\end{array} \right)\quad \mbox{for all}\quad j \le p-2.\end{eqnarray}
\end{proposition}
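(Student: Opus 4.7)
The plan is to invoke formula (\ref{fs}), $b_j(M_\ell)=a_j(\ell)+\tilde a_j(\ell)+a_{n-3-j}(\ell)$, and evaluate each of the three summands under the hypotheses that $\ell\in\Gamma_p$ is generic and $j\le p-2$. Since $\ell$ is generic, no subset of $\{1,\dots,n\}$ is median, so the middle term satisfies $\tilde a_j(\ell)=0$ immediately.

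For $a_j(\ell)$ I would first establish the obvious monotonicity of $\Gamma_p$: if every $p$-subset of $\{1,\dots,n\}$ is short with respect to $\ell$, then so is every subset of cardinality at most $p$. Indeed, given $J'$ with $|J'|<p$, extend $J'$ to some $J\supset J'$ with $|J|=p$; then $\sum_{i\in J'}l_i\le\sum_{i\in J}l_i<1/2$, so $J'$ is short. The hypothesis $j\le p-2$ gives $1+j\le p-1<p$, so every $(1+j)$-subset of $\{1,\dots,n\}$ containing the distinguished maximal-length index is short, yielding $a_j(\ell)=\binom{n-1}{j}$.

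For the third term, a subset $J$ counted by $a_{n-3-j}(\ell)$ has cardinality $n-2-j$, hence complement of cardinality $2+j\le p$, where the inequality uses $j\le p-2$ once more. By the monotonicity above, the complement is short, so $J$ itself is long and in particular not short; thus $a_{n-3-j}(\ell)=0$. Substituting the three evaluations into (\ref{fs}) gives $b_j(M_\ell)=\binom{n-1}{j}$, as required. None of the steps looks difficult; the only point worth flagging is that the bound $j\le p-2$ is used twice in parallel fashion---$1+j\le p-1$ for the count $a_j$, and $2+j\le p$ for the vanishing of $a_{n-3-j}$---so this simple argument cannot be pushed to the range $j=p-1$ without additional information about $(p+1)$-subsets.
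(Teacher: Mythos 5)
Your proposal is correct and follows essentially the same route as the paper: apply formula (\ref{fs}), note $\tilde a_j(\ell)=0$ by genericity, get $a_j(\ell)=\binom{n-1}{j}$ because all subsets of cardinality $1+j\le p$ are short, and kill $a_{n-3-j}(\ell)$ because the complement of any such set has cardinality $j+2\le p$ and hence is short, forcing the set itself to be long. The only (harmless) difference is that you make the monotonicity of shortness under taking subsets explicit, which the paper leaves implicit.
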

\begin{proof}
As follows from (\ref{fs}), if $\ell$ is generic and lies in $\Gamma_p$ then $\tilde a_j(\ell)=0$ and for $j\le p$ the number $a_j(\ell)$ equals the number of all subsets of $\{1, \dots, n\}$ which do not contain the element with the maximal length $l_i$, i.e.  $a_j(\ell) =\left(\begin{array}{c}  n-1\\ j\end{array} \right)$.
The number $a_{n-3-j}(\ell)$ is the number of all short subsets of cardinality $n-2-j$ containing the maximal element; the complements of these sets are short and their cardinality is $j+2$. However if $j+2\le p$ all such subsets must be short, i.e. $a_{n-3-j}(\ell) =0$ for $j\le p-2$.
\end{proof}

\begin{proof}[Proof of Theorem \ref{thmain1}] The proof essentially repeats the arguments of the proof of Theorem \ref{thmain2} with Proposition \ref{upperbound} replacing Proposition \ref{upperbound1} and with Proposition \ref{constant} replacing Proposition \ref{constant1}.
\end{proof}

\section{Normal length vectors}

In paper \cite{FHS} we introduced the notion of a normal length vector. A vector $\ell=(l_1, \dots, l_n)\in \R^n_+$ is called {\it normal} if the intersection of all
subsets $J\subset \{1, \dots, n\}$ of cardinality 3 which are long with respect to $\ell$ is not empty.
A length vector $\ell$ with the property that all subsets of cardinality $3$ are short with respect to $\ell$
is normal
 since then the intersection of all long subsets of cardinality $3$
equals $\{1, \dots, n\}$ as the intersection of the empty family.

The
importance of normal length vectors stems from the following result proven in \cite{FHS}:

\begin{theorem}[\cite{FHS}]
Suppose that $\ell, \ell'\in \Delta^{n-1}$ are two ordered length
vectors such that there exists a graded algebra isomorphism
between the integral cohomology algebras $H^\ast(M_\ell) \to
H^\ast(M_{\ell'})$. Assume that one of the vectors $\ell,
\ell'$ is normal. Then the other vector is normal as well and
$\ell$ and $\ell'$ lie in the same
stratum of the simplex $\Delta^{n-1}$; in particular, the polygon spaces $M_\ell$ and $M_{\ell'}$ are diffeomorphic.
\end{theorem}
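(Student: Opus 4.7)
The plan is to recover from the graded algebra structure of $H^\ast(M_\ell)$ enough combinatorial data about $\ell$ to determine its chamber, and to show that the normality hypothesis is precisely what makes this recovery well-defined. I would start from the Hausmann--Knutson presentation of $H^\ast(M_\ell;\mathbb{Z})$: it is generated in degree two by a distinguished class $R$ and classes $V_1,\dots,V_n$, with an explicit ideal of relations that depends only on which subsets of $\{1,\dots,n\}$ are long with respect to $\ell$. Consequently the collection of long subsets can be read off from the presentation, and by the chamber classification of \cite{FHS} this data determines the stratum.

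The heart of the matter is to identify, intrinsically inside the abstract algebra, the distinguished class $R$ and the unordered collection $\{V_1,\dots,V_n\}$. I would try to characterize $R$ by a universal property in degree two (for instance via maximal nilpotency index among degree two classes, reflecting its role as the hyperplane class from the symplectic reduction), and to characterize each $V_i$ as a degree two class satisfying the quadratic identity $V^2 = RV$ together with a primitivity or indecomposability condition. Once $R$ and $\{V_i\}$ are recovered, the long subsets are detected through the vanishing pattern of monomials in these generators, yielding the chamber and hence the stratum.

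The main obstacle---and the reason normality is needed---is ambiguity in this recovery. For a general $\ell$ the abstract algebra may admit multiple inequivalent choices of generating families $\{V_i\}$, producing combinatorially different short/long data from the same ring. The normality condition---existence of an index lying in every long $3$-subset---rigidifies the presentation by singling out a preferred vertex, and one shows that for normal $\ell$ the multiset $\{V_i\}$ is determined up to the harmless involution $V_i\leftrightarrow R-V_i$, which corresponds to passing to complementary subsets and preserves the stratum. Granted this rigidity, an algebra isomorphism $H^\ast(M_\ell)\to H^\ast(M_{\ell'})$ transports any generating data for $\ell$ to generating data for $\ell'$, which forces $\ell'$ to be normal, matches the long-subset structures, and by \cite{FHS} places $\ell$ and $\ell'$ in the same stratum, so that $M_\ell$ and $M_{\ell'}$ are diffeomorphic. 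I expect the intrinsic characterization of the $V_i$ inside the abstract ring to be the technically delicate step, since it is precisely where non-normal chambers produce the counterexamples that the hypothesis is designed to exclude.
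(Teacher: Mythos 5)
The first thing to note is that the paper you are working from does not prove this theorem at all: it is stated as an imported result, attributed to the preprint \cite{FHS} (``On the conjecture of Kevin Walker''), and is used in the present paper only as motivation for estimating the measure of the set of non-normal length vectors. So there is no in-paper proof to compare your argument against; any comparison would have to be with the external reference.

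Judged on its own terms, your proposal identifies the right high-level strategy --- the actual argument in \cite{FHS} does proceed by showing that for normal $\ell$ the graded ring $H^\ast(M_\ell)$ determines the collection of short subsets, hence the stratum, via the Hausmann--Knutson presentation with generators $R, V_1, \dots, V_n$ and the relation $V_i^2 = -RV_i$ (watch the sign convention) together with relations indexed by long subsets. But what you have written is a plan, not a proof, and the plan leaves precisely the hard part as a sequence of hopes. The intrinsic characterization of $R$ ``via maximal nilpotency index'' is offered only as a guess (``for instance''); the characterization of the $V_i$ by ``a primitivity or indecomposability condition'' is not formulated, let alone verified; and the central rigidity claim --- that for normal $\ell$ any generating family of the abstract ring satisfying your conditions agrees with $\{V_1,\dots,V_n\}$ up to a stratum-preserving involution --- is asserted without argument. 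That rigidity statement is exactly where the content of the theorem lives, and it requires a genuine combinatorial analysis of which degree-two classes can satisfy the defining quadratic relations in the quotient ring; it is not a formal consequence of the presentation. You also need to explain how normality (an index common to all long $3$-element subsets) actually enters that analysis, rather than just gesturing at it as the thing that ``rigidifies the presentation.'' As it stands, the proposal would not be accepted as a proof; it is a reasonable reconstruction of the strategy of \cite{FHS} with the technical core left blank.
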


We apply the technique developed in this paper to show that the Lebesgue measure of the set of
length vectors which are not normal is exponentially small for large $n\to \infty$.

\begin{proposition} Let ${\mathcal N}_n\subset \Delta^{n-1}$ denote the set of all normal length vectors.
The relative volume of $\mathcal N_n$ satisfies the following inequality
\begin{eqnarray}
 \frac{\vol(\Delta^{n-1} - {\mathcal N}_n)}{\vol(\Delta^{n-1})} \, <\,  \frac{24 n^6}{2^n}.
 \end{eqnarray}
\end{proposition}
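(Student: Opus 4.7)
The plan is to reduce non-normality to a one-parameter geometric condition that is then controlled directly by \proref{prop4}. I begin by unwinding the definition: $\ell \in \Delta^{n-1}$ fails to be normal precisely when the intersection $\bigcap_J J$ over all long 3-subsets $J$ with respect to $\ell$ is empty. This vacuous intersection is equivalent to the assertion that for every index $i \in \{1, \dots, n\}$ there exists a long 3-subset $J$ with $i \notin J$; in particular, specializing the statement to $i = 1$, every non-normal length vector admits a long 3-subset $J \subset \{2, \dots, n\}$.

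Using the notation $V_J$ from Section~4 for the set where $\sum_{j \in J} l_j \geq 1/2$, this gives the inclusion
\begin{eqnarray*}
\Delta^{n-1} \setminus \mathcal{N}_n \, \subset \, \bigcup_{\substack{J \subset \{2,\dots,n\}\\ |J|=3}} V_J.
\end{eqnarray*}
The indexing family has cardinality $\binom{n-1}{3} \leq n^3/6$, and \proref{prop4} applied with $p = 3$ supplies the per-term bound $\vol(V_J)/\vol(\Delta^{n-1}) < n^3 \cdot 2^{-n}$, valid for $n > 4$. A union bound then delivers a relative volume of at most $n^6/(6 \cdot 2^n)$, which is well under the stated estimate $24 n^6/2^n$. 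The finitely many small cases $n \leq 4$ are automatic because for these values $24 n^6/2^n$ already exceeds $1$.

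I do not anticipate a substantive obstacle. The combinatorial step is the contrapositive of the definition of normal, and the analytic step is exactly what \proref{prop4} was designed to give. The only minor point worth flagging is that $\mathcal{N}_n$ is defined via strict long/short inequalities whereas $V_J$ uses the closed halfspace $\phi_J \geq 0$; the discrepancy is supported on the measure-zero wall $\{\phi_J = 0\}$ and so has no effect on the volume computation.
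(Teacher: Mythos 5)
Your proof is correct and follows essentially the same route as the paper: the paper observes $\Gamma_3\subset\mathcal N_n$ and invokes Proposition~\ref{prop3}, which is itself proved by exactly your union bound over the sets $V_J$ together with Proposition~\ref{prop4}. Your only deviation is a mild sharpening --- using the full contrapositive of normality to restrict to $3$-subsets avoiding a fixed index, giving $\binom{n-1}{3}$ rather than $\binom{n}{3}$ terms --- which is harmless but not needed for the stated constant.
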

\begin{proof}We observe that $\Gamma_3 \subset \mathcal N_n$ and applying (\ref{ineq3}) we find
 $$\frac{\vol(\Delta^{n-1} - {\mathcal N}_n)}{\vol(\Delta^{n-1})} \, <\,   \frac{\vol(\Delta^{n-1} - \Gamma_3)}{\vol(\Delta^{n-1})} \, <\,  \frac{24 n^6}{2^n}.$$
\end{proof}

\section{Mathematical expectations of higher moments}

\begin{theorem} Given an admissible sequence of probability measures $\nu_n$ and integers
$p\geq 0$, and $k$, then there exist constants $C>0$ and $0<a<1$
such that the $k$-th powers of the average Betti numbers (\ref{moments}) satisfy
\begin{eqnarray}\label{one1}
\left| \int_{\Delta^{n-1}}b_{2p}(N_\ell)^k d\nu_n - \left(\sum_{i=0}^p\left( \begin{array}{c} n-1\\ i\end{array}\right)\right)^k\right| < C\cdot a^n
\end{eqnarray}
and
\begin{eqnarray}\label{two2}
\left| \int_{\Delta^{n-1}}b_{p}(M_\ell)^k d\nu_n - \left( \begin{array}{c} n-1\\ i\end{array}\right)^k\right| < C\cdot a^n
\end{eqnarray}
for all $n=3, 4, \dots$.
\end{theorem}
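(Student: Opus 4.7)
The plan is to mimic the proofs of Theorems \ref{thmain2} and \ref{thmain1}: exploit that on the ``good'' set $\Gamma_{p+2} \subset \Delta^{n-1}$ the Betti numbers are exactly constant (Proposition \ref{constant1} in the spatial case, Proposition \ref{constant} in the planar case), and control the complement via the admissibility of $\nu_n$ together with the exponentially small Lebesgue measure of $\Delta^{n-1}-\Gamma_{p+2}$ established in Proposition \ref{prop3}.

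Concretely for (\ref{one1}), I would split
\begin{eqnarray*}
\int_{\Delta^{n-1}} b_{2p}(N_\ell)^k\, d\nu_n = \int_{\Gamma_{p+2}} b_{2p}(N_\ell)^k\, d\nu_n + \int_{\Delta^{n-1} - \Gamma_{p+2}} b_{2p}(N_\ell)^k\, d\nu_n.
\end{eqnarray*}
On $\Gamma_{p+2}$ the integrand equals the constant $S_n^k := \left(\sum_{i=0}^p \binom{n-1}{i}\right)^k$ by Proposition \ref{constant1}, so the first integral equals $S_n^k - S_n^k\cdot \nu_n(\Delta^{n-1}-\Gamma_{p+2})$. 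The triangle inequality then reduces the task to bounding
\begin{eqnarray*}
\Bigl(S_n^k + \sup_{\ell \in \Delta^{n-1}-\Gamma_{p+2}} b_{2p}(N_\ell)^k\Bigr) \cdot \nu_n(\Delta^{n-1}-\Gamma_{p+2}).
\end{eqnarray*}

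Both $S_n^k$ and the pointwise supremum are polynomial in $n$: by Proposition \ref{upperbound1}, $b_{2p}(N_\ell)^k \leq 2^k(n-1)^{pk}$, and $S_n^k$ admits the same type of upper bound. The measure factor is controlled as in the proof of Theorem \ref{thmain2}: the inclusion $\Delta^{n-1}-\Gamma_{p+2} \subset \Lambda_{p+2}$ (verified there), combined with Definition \ref{admissible}(iii) and Proposition \ref{prop3}, yields
\begin{eqnarray*}
\nu_n(\Delta^{n-1}-\Gamma_{p+2}) \, \leq\,  A b^n \cdot \mu_n(\Delta^{n-1}-\Gamma_{p+2})\, \leq\,  A \cdot n^{2(p+2)} \cdot (b/2)^n
\end{eqnarray*}
with $0 < b < 2$. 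The product is thus bounded by $C' n^{pk+2p+4}(b/2)^n$, which is dominated by $Ca^n$ for any $a \in (b/2, 1)$ and suitable $C > 0$. The planar estimate (\ref{two2}) is handled by the identical template, substituting Proposition \ref{constant} for Proposition \ref{constant1} and Proposition \ref{upperbound} for Proposition \ref{upperbound1}; Proposition \ref{upperbound} gives $b_p(M_\ell)^k \leq n^{(p+2)k}$, still merely polynomial.

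There is no substantive obstacle here; the argument is robust precisely because the method avoids explicit Betti-number formulas and relies only on (a) pointwise constancy on $\Gamma_{p+2}$ and (b) a polynomial pointwise upper bound elsewhere. Raising to the $k$-th power merely inflates the polynomial prefactor (the degree grows from $p$ to $pk$, or from $p+2$ to $(p+2)k$ in the planar case), which is effortlessly absorbed by the exponentially decaying measure of the exceptional set. In this sense the higher-moment statement follows essentially for free from the machinery developed for the first moment.
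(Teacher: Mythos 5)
Your proposal is correct and follows exactly the paper's intended argument: the paper's own proof is a two-line sketch stating that one repeats the proofs of Theorems \ref{thmain2} and \ref{thmain1}, using constancy of the integrand on $\Gamma_{p+2}$, the polynomial pointwise bounds, and the exponentially small measure of the complement via Proposition \ref{prop3} and admissibility. Your write-up simply fills in the details of that same scheme, and the bookkeeping (the degree of the polynomial prefactor growing with $k$) is handled correctly.
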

\begin{proof} One simply repeats the arguments used in proofs of Theorems \ref{thmain2} and \ref{thmain1} with minor modifications. The scheme remains the same: on the central domain $\Gamma_{p+2}\subset \Delta^{n-1}$ the functions appearing in (\ref{one1}) and (\ref{two2}) coincide. The volume of
the remaining part $\Delta^{n-1}-\Gamma_{p+2}$ is exponentially small (by Proposition \ref{prop3})
and the functions involved have polynomial upper bounds in $n$.
\end{proof}

\section{Some open questions}

In this section I would like to mention a few interesting open questions. The tools of the present paper seem to be inadequate to give their solutions.

It would be interesting to find the average total Betti number
$$B(n,\mu) = \int_{\R^n_+}B(M_\ell)d\mu, \quad \mbox{where}\quad B(M_\ell)=\sum_{p=0}^{n-3}b_p(M_\ell)$$
for various natural probability measures $\mu$ on the spaces of parameters $\R^n_+$ (or on $\Delta^{n-1}$) and
to examine the behavior of $B(n,\mu)$ for $n\to \infty$. Although we know the behavior of the individual average Betti numbers $\int_{\R^n_+} b_p(M_\ell)d\mu$
for large $n$ and for fixed $p$ one cannot simply add the terms up.

By Theorem 2 from \cite{FS} one has an upper bound
\begin{eqnarray}\label{upper}
B(n, \mu) \, \leq\,  2^{n-1}- \left(  \begin{array}{c} n-1\\ r\end{array}  \right) \, \sim\,  2^{n-1}\cdot\left(1-\sqrt{\frac{2}{n\pi}}\right),
\end{eqnarray}
where $r=[(n-1)/2]$ denotes the integer part of $(n-1)/2$. It is plausible that  the RHS of (\ref{upper}) gives the right asymptotic for $B(n,\mu)$.

One can raise a similar question concerning the average total Betti numbers of spatial polygon spaces $N_\ell$.

A homotopy invariant $\tc(X)$, introduced in \cite{F}, measures the complexity of the problem of navigation in a topological space $X$,
viewed as the configuration space of a mechanical system.
It is a challenging problem to compute $\tc(M_\ell)$ as a function of the length vector $\ell\in \R^n_+$ and then study its average
\begin{eqnarray}\label{tcml}
\tc(n, \mu) = \int_{\R^n_+}\, \tc(M_\ell)d\mu
\end{eqnarray}
and behavior as $n\to \infty$ under different assumptions on the measure $\mu$. Formally
the invariant $\tc(X)$ is defined only when $X$ is path-connected. This assumption may be violated in the case of spaces $M_\ell$.
If $X$ is not path-connected it is natural to define $\tc(X)$ as $\max \tc(X_i)$ where $X_i$ are path-connected components of $X$.
If $M_\ell$ is disconnected then it is disjoint union of two tori $T^{n-3}$ (see \cite{KM1}) and hence in this case $\tc(M_\ell)=\tc(T^{n-3}) =n-2$, see \cite{F}.

The similar question concerning spatial polygon spaces $N_\ell$ is
much easier. One can show that $\tc(N_\ell) = 2n-5$ assuming that $N_\ell\not=\emptyset$ and hence
\begin{eqnarray}\label{tcnl}
\int_{\R^n_+}\tc(N_\ell)d\nu_n \, \sim \, 2n-5
\end{eqnarray}
for any admissible sequence of probability measures $\nu_n$. The error in (\ref{tcnl}) is exponentially small for large $n$.

The results concerning average topological complexity (\ref{tcml}), (\ref{tcnl})
of polygon spaces $M_\ell$ and $N_\ell$ may have important applications in
molecular biology, statistical shape theory and robotics.

\end{document}